\crefname{hypothesis}{Hypothesis}{Hypotheses}
\title{Optimal Low-dimensional Approximation of Transfer Operators via Flow Matching: Computation and Error Analysis}
\author{Zhicheng Zhang\thanks{School of Mathematical Sciences, Tongji University, Shanghai, P. R. China.}
\and Ling Guo\thanks{Department of Mathematics, Shanghai Normal University, Shanghai, P. R. China.}
\and Hao Wu\thanks{School of Mathematical Sciences, Institute of Natural Sciences and MOE-LSC, Shanghai Jiao Tong University, Shanghai, P. R. China 
  (\email{hwu81@sjtu.edu.cn}).}
}
\DeclareMathOperator{\divg}{div}
\begin{document}

\maketitle
\renewcommand{\qedsymbol}{}

\begin{abstract}
Reaction coordinates (RCs) are low-dimensional representations of complex dynamical systems that capture their long-term dynamics. In this work, we focus on the criteria of lumpability and decomposability, previously established for assessing RCs, and propose a new flow matching approach for the analysis and optimization of reaction coordinates based on these criteria. This method effectively utilizes data to quantitatively determine whether a given RC satisfies these criteria and enables end-to-end optimization of the reaction coordinate mapping model. Furthermore, we provide a theoretical analysis of the relationship between the loss function used in our approach and the operator error induced by dimension reduction.
\end{abstract}

\begin{keywords}
reaction coordinate, transfer operator, error bound, flow matching, dimension reduction 
\end{keywords}

\section{Introduction}
The reduction of model complexity and the simulation of long-term behavior in complex, high-dimensional dynamical systems are essential across various scientific disciplines. With the increasing computational power, it is now feasible to directly simulate a wide range of complex systems, from macro-scale phenomena such as climate to micro-scale systems like fluid dynamics and protein folding. These simulations provide valuable trajectory data for further analysis and prediction. To investigate system behavior over time scales longer than those accessible through direct simulation, a variety of model reduction methods have been developed (e.g., \cite{mardt2018vampnets,wu2020variational,noe2013variational,williams2015data}).
In these contexts, it is observed that over long timescales, these systems often exhibit reduced complexity and can be effectively characterized by a small set of features, known as reaction coordinates (RCs). By projecting raw data onto subspaces parameterized by these RCs, the complexity of the system can be significantly reduced while retaining essential dynamical information, enabling further in-depth analysis.

In the identification of RCs, methods based on the spectral decomposition of transfer operators have long played a crucial role \cite{noe2013variational,nuske2016variational,mardt2018vampnets,chen2019nonlinear,wu2020variational}. However, these methods typically yield linear reduced-order kinematic models. For systems where the transfer density functions reside on nonlinear low-dimensional manifolds \cite{bittracher2018transition}, these approaches often struggle to identify lower-dimensional RCs. To address this limitation, various deep learning-based methods have been proposed in recent years, including techniques such as time-lagged autoencoders \cite{wehmeyer2018time}, information bottlenecks \cite{wang2019past,wang2021state,federici2023latent}, and normalizing flows \cite{wu2024reaction}. While these approaches offer promising avenues, they lack the rigorous theoretical foundation that underpins transfer operator-based methods, particularly in establishing a clear relationship between the loss functions used in deep learning and the operator errors induced by dimension reduction.

In \cite{bittracher2023optimal}, the authors extended conclusions from finite-state Markov chains to general Markov processes, theoretically establishing two criteria for evaluating the quality of RCs: \textit{lumpability} and \textit{decomposability}. These criteria define the conditions that the reduced-order transition densities, derived from optimal RCs, should satisfy. However, the numerical methods proposed in that work involve the estimation and integration of system transition probability densities, which limits scalability and makes them challenging to apply to high-dimensional systems.

In this paper, we recognize that the tasks of evaluating and optimizing RCs based on lumpability and decomposability can be equivalently reformulated as the analysis and refinement of two conditional probability models. Leveraging this equivalence, we introduce a novel and highly effective technique from generative modeling—flow matching—to tackle the data-driven identification of RCs. This approach, termed FMRC, offers a scalable solution for identifying and refining RCs.

Furthermore, we establish a theoretical framework that links lumpability and decomposability to the reduction of transfer operators. Based on this framework, we demonstrate that the loss function used in FMRC training can theoretically control the error between the reduced-order and original transition operators. Compared to existing deep learning-based methods, FMRC provides a more robust and theoretically grounded approach to reaction coordinate identification.


\section{Preliminaries}
\label{sec:pre}
Before delving into our model reduction method, we first introduce some relevant definitions, notations, and assumptions that will be used throughout this work.

\subsection{Markov processes and transition densities}
Let $\mathbb{X}\subset \mathbb{R}^D$ be a measurable set with Lebesgue measure $\lambda$. Consider a Markov process $\{X_t\}_{t\in\mathbb{R}^+}$, such as molecular dynamics data generated from an overdamped Langevin equation. Given a lag time $\tau$, the dynamics of $X_t$ can be characterized by the \textit{transition probability function} $P_\tau\colon \mathbb{X}\times\mathcal{B}\to[0,1]$, defined as
\begin{equation}
    P_\tau(x, B) = \operatorname{Prob}\left[X_{t+\tau} \in B \mid X_{t}=x\right] \quad \text{for all } t \geq 0,
\end{equation}
where $\mathcal{B}$ denotes the Borel $\sigma$-algebra on $\mathbb{X}$, and $B \in \mathcal{B}$. Here, $P_\tau(x, \cdot)$ is a probability measure on $\mathcal{B}$, and $P_\tau(\cdot,B)$ is a Borel-measurable function. We further assume that $P_\tau(x, \cdot)$ is absolutely continuous with respect to the Lebesgue measure, allowing us to express $P_\tau(x, \cdot)$ as
\begin{equation}
    P_\tau(x, B) = \int_B p_\tau(x,y)\, \mathrm{d}y, \quad \forall x \in \mathbb{X},\ B \in \mathcal{B},
\end{equation}
where $p_\tau\colon \mathbb{X}\times\mathbb{X}\to \mathbb{R}^+$ is the \textit{transition density}.

\subsection{Simulation data}
In this work, the simulation data for the Markov process $x_t$ are assumed to be recorded in the following form: $\{(x_n, y_n)\}_{n=1}^N$, representing all transition pairs $(x_t, x_{t+\tau})$ with a lag time $\tau$ observed in simulation trajectories. For example, given a trajectory $\{x_1, x_2, \dots, x_T\}$, the dataset would consist of the pairs $\{(x_1, x_{1+\tau}),$ $ (x_2, x_{2+\tau}),\ldots, (x_{T-\tau}, x_T)\}$.

Furthermore, we assume that as $N$ approaches infinity, the empirical joint probability density function of $x_n$ and $y_n$ is given by $\rho(x,y) = \rho_0(x) p_\tau(x,y)$, where $\rho_0$ represents the marginal probability density function of $x_n$. The corresponding marginal probability density function of $y_n$ is then given by
\[
\rho_1(y) = \int_{\mathbb{X}} \rho_0(x) p_\tau(x, y) \, \mathrm{d}x.
\]
It is important to note that we do not assume the system has reached equilibrium, meaning $\rho_0$ and $\rho_1$ may differ. In the case of equilibrium, we would have $\rho_0 = \rho_1 = \pi$, where $\pi$ denotes the stationary distribution of the process $\{X_t\}$.

\subsection{Transfer operators}

A fundamental tool for describing the global kinetic behavior of a metastable system is transfer operators. In this work, we focus on two key transfer operators: the Perron-Frobenius (PF) operator and the Koopman operator. These operators respectively characterize the evolution of the state distribution and the dynamics of observables within a Markov process.

\begin{definition}
For a given lag time $\tau$, the following operators are defined:
\begin{enumerate}
    \item Let $u_t(x) = p_t(x)/\rho_0(x) \in L_{\rho_0}^1(\mathbb{X})$ be a probability density with respect to the density $\rho_0$, where $L_{\rho_{0}}^{1}(\mathbb{X})=\{u|\int_{\mathbb{X}}u(x)\rho_{0}(x)\mathrm{d}x<\infty\}$. The Perron–Frobenius operator $\mathcal{T}_\tau: L_{\rho_0}^1(\mathbb{X})\to L_{\rho_1}^1(\mathbb{X})$ is given by
    \[
        (\mathcal{T}_\tau u_t)(y) = \int_{\mathbb{X}} \frac{\rho_0(x)}{\rho_1(y)} p_\tau(x,y) u_t(x) \, \mathrm{d} x=\mathbb E_{(X_t,X_{t+\tau})\sim\rho}\left[u_t(X_t)|X_{t+\tau}=y\right].
    \]
    
    \item The Koopman operator $\mathcal{K}_\tau: L^\infty(\mathbb{X})\to L^\infty(\mathbb{X})$ is defined as
    \[
        \mathcal{K}_\tau f(x) = \int_{\mathbb{X}} p_\tau(x, y) f(y) \, \mathrm{d} y = \mathbb E_{(X_t,X_{t+\tau})\sim\rho}\left[f\left(X_{t+\tau}\right) \mid X_t = x\right].
    \]
\end{enumerate}
\end{definition}
It is important to note that, unlike the common definition (see, e.g., Definition 2.5 in [1]), we consider the PF operator with respect to the empirical distribution of the data rather than the equilibrium distribution. This is particularly advantageous for both algorithm design and theoretical analysis when the simulation data is non-equilibrium. If the simulation data reach equilibrium, our definition of the PF operator coincides with the standard one.

It can be established that transfer operators are linear, enabling low-rank approximations through their dominant eigenfunctions or singular functions. Consequently, numerous dimension reduction methods based on variational estimation of transfer operators have been proposed, where dominant spectral components are selected as RCs. However, in systems with a large number of metastable states, the number of dominant spectral components usually corresponds to the number of these states. This inherent characteristic implies that such methods may face challenges in deriving low-dimensional and effective RCs.

\section{Criteria for optimal reaction coordinates}
\label{sec:criteria}
Any function $r:\mathbb{X} \to \mathbb{R}^d$ with $d \ll D$ can be considered a reaction coordinate (RC). In this paper, we consider $r$ to be continuously differentiable. Furthermore, for any $z \in \mathbb{Z}$, the level set $\Sigma_{r}(z):=\{x \in \mathbb{X} \mid r(x)=z\}$ is assumed to be a $(D - d)$-dimensional topological submanifold of $\mathbb{X}$.
\cite{bittracher2023optimal} proposes that optimal RCs should approximately satisfy two key conditions: \textit{lumpability} and \textit{decomposability}. We will now introduce these two conditions in detail.

\begin{definition}[Lumpability]\label{def:lump}
    A system is said to be lumpable with respect to the RC $r$ if, for all $x, y \in \mathbb{X}$, there exists a function $p_\tau^L\colon \mathbb{Z} \times \mathbb{X} \to \mathbb{R}^+$ such that
    \begin{equation}\label{eq:lump}
        p_\tau(x, y) = p_\tau^L(r(x), y).
    \end{equation}
\end{definition}

From the above definition, we can conclude that lumpability means that the transition density $p_\tau(x,\cdot)$ from a given $x$ depends only on the value of the RC $r(x)$, and not on the precise location within the level set of 
$r(x)$. If an RC satisfies the lumpability condition, we can infer that it effectively preserves and describes the long-term behavior of the system.

\begin{definition}[Decomposability]
    A system is said to be decomposable with respect to the RC $r$ if, for all $x, y \in \mathbb{X}$, there exists functions $p_\tau^D\colon \mathbb{X} \times \mathbb{Z} \to \mathbb{R}^+$ and $p_{\mathrm{local}}\colon \mathbb{Z} \times \mathbb{X} \to \mathbb{R}^+$ such that
    \begin{equation}\label{eq:decomp}
        p_\tau(x,y)=p_{\tau}^{D}(x,r(y))p_{\mathrm{local}}(r(y),y).
    \end{equation}
\end{definition}

The decomposability implies that the conditional distribution of $X_{t+\tau}$ given the system state $X_t$ can be determined through a two-step procedure. First, the distribution of the RC $r\left(X_{t+\tau}\right)$ at time $t+\tau$ is computed by $p_\tau^D$. Next, within the level set defined by each value of the RC, the distribution of $X_{t+\tau}$ is evaluated by using $p_{\mathrm{local}}$, which is independent of $X_t$.

\begin{remark}
The definition presented here differs slightly in form from that in \cite{bittracher2023optimal}. In \cite{bittracher2023optimal}, the decomposability is defined as follows:
\begin{equation}\label{eq:original-decomp}
p_{\tau}(x,y) = \tilde{p}_{\tau}^{D}(x,r(y))\pi(y).
\end{equation}
However, it can be shown that under the existence of the stationary distribution, the two definitions are equivalent. Utilizing the properties of the stationary distribution, we can derive from \eqref{eq:decomp} that
\[
\pi(y) = p_{\mathrm{local}}(r(y),y)\cdot\int\pi(x)p_{\tau}^{D}(x,r(y))\mathrm{d}x,
\]
which leads to the expression for decomposability as given in \eqref{eq:original-decomp}, where
\[
\tilde{p}_{\tau}^{D}(x,r)=\frac{p_{\tau}^{D}(x,r)}{\int\pi(x)p_{\tau}^{D}(x,r)\mathrm{d}x}.
\]
\end{remark}

It can be observed that the definition of decomposability is relatively complex. To simplify, we introduce a backward transition density:
\[
p_{-\tau}(y, x) = \frac{\rho_{0}(x) p_{\tau}(x, y)}{\rho_{1}(y)},
\]
where \( p_{-\tau}(y, x) \) represents the posterior probability density of \( X_t = x \) given \( X_{t+\tau} = y \), assuming the prior distribution of \( X_t \) is \( \rho_0 \). We can then provide an equivalent characterization of d decomposability:
\begin{lemma}\label{lem:simple-decomp}
A system is decomposable with respect to $r$ if and only if there exists a function \( p_{-\tau}^D\colon \mathbb{Z} \times \mathbb{X} \to \mathbb{R}^+ \) such that
\begin{equation}\label{eq:simple-decomp}
p_{-\tau}(y, x) = p_{-\tau}^D(r(y), x).
\end{equation}
\end{lemma}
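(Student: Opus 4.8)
The plan is to prove both directions by explicitly constructing the required conditional-density factor, using the identity $p_{-\tau}(y,x) = \rho_0(x)\, p_\tau(x,y)/\rho_1(y)$ to pass between a factorization of the forward density $p_\tau$ and one of the backward density $p_{-\tau}$. Throughout I would invoke the standing non-degeneracy that $\rho_0$ and $\rho_1$ are strictly positive on $\mathbb{X}$ (otherwise one restricts to their supports); the points where a density vanishes are immaterial, since there both \eqref{eq:decomp} and \eqref{eq:simple-decomp} reduce to $0 = 0$.

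For the implication ``decomposable $\Rightarrow$ \eqref{eq:simple-decomp}'', I would substitute \eqref{eq:decomp} into the definition of $p_{-\tau}$ to get
\[
p_{-\tau}(y,x) = \frac{\rho_0(x)\, p_\tau^D(x,r(y))\, p_{\mathrm{local}}(r(y),y)}{\rho_1(y)}.
\]
The apparent difficulty is that $\rho_1(y)$ and $p_{\mathrm{local}}(r(y),y)$ each depend on $y$ beyond $r(y)$, so neither factor alone has the desired form. The resolution is to integrate \eqref{eq:decomp} against $\rho_0$, which gives
\[
\rho_1(y) = \int_{\mathbb{X}} \rho_0(x)\, p_\tau(x,y)\, \mathrm{d}x = p_{\mathrm{local}}(r(y),y) \int_{\mathbb{X}} \rho_0(x)\, p_\tau^D(x,r(y))\, \mathrm{d}x,
\]
so that the ratio $p_{\mathrm{local}}(r(y),y)/\rho_1(y)$ equals the reciprocal of $\int_{\mathbb{X}} \rho_0(x')\, p_\tau^D(x',r(y))\, \mathrm{d}x'$, a quantity depending on $y$ only through $r(y)$. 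Defining
\[
p_{-\tau}^D(z,x) := \frac{\rho_0(x)\, p_\tau^D(x,z)}{\int_{\mathbb{X}} \rho_0(x')\, p_\tau^D(x',z)\, \mathrm{d}x'}
\]
then yields $p_{-\tau}(y,x) = p_{-\tau}^D(r(y),x)$; positivity of $\rho_1$ ensures the denominator is finite and nonzero wherever it matters (if it vanished, $p_\tau(\cdot,y)$ and hence $\rho_1(y)$ would vanish too).

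For the converse, assume \eqref{eq:simple-decomp} and invert the definition of $p_{-\tau}$ to obtain $p_\tau(x,y) = \rho_1(y)\, p_{-\tau}^D(r(y),x)/\rho_0(x)$. One then simply reads off a valid factorization of the form \eqref{eq:decomp}: take $p_\tau^D(x,z) := p_{-\tau}^D(z,x)/\rho_0(x)$, which is a legitimate nonnegative function on $\mathbb{X} \times \mathbb{Z}$, and $p_{\mathrm{local}}(z,y) := \rho_1(y)$, constant in its first argument; then $p_\tau^D(x,r(y))\, p_{\mathrm{local}}(r(y),y) = p_\tau(x,y)$, which is precisely \eqref{eq:decomp}.

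I expect the genuine content to lie entirely in the ``ratio trick'' of the forward direction — recognizing that the offending $y$-dependence of $\rho_1(y)$ and $p_{\mathrm{local}}(r(y),y)$ cancels — while the rest is bookkeeping: verifying that the constructed functions are nonnegative and well defined, which follows from the positivity assumptions, and observing that no normalization of $p_{\mathrm{local}}$ is needed, as the definitions only require existence of such functions. Should one insist that $p_{\mathrm{local}}(z,\cdot)$ be a bona fide probability density supported on the level set $\Sigma_r(z)$, the normalizing constant can be absorbed into $p_\tau^D$ by the same cancellation argument, leaving the equivalence intact.
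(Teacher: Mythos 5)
Your proof is correct and follows essentially the same route as the paper's: for necessity, both arguments substitute \eqref{eq:decomp} into the definition of $p_{-\tau}$ and observe that the factor $p_{\mathrm{local}}(r(y),y)/\rho_1(y)$ cancels to leave $\rho_0(x)p_\tau^D(x,r(y))/\int\rho_0(x')p_\tau^D(x',r(y))\,\mathrm{d}x'$, which depends on $y$ only through $r(y)$; for sufficiency, both invert the definition of $p_{-\tau}$ and read off the factorization with $p_{\mathrm{local}}(z,y)=\rho_1(y)$ and $p_\tau^D(x,z)=p_{-\tau}^D(z,x)/\rho_0(x)$ (the paper writes the denominator as $\int\rho_1(y)p_{-\tau}^D(r(y),x)\,\mathrm{d}y$, which equals $\rho_0(x)$). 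The only differences are cosmetic — your explicit positivity caveats and the remark on normalizing $p_{\mathrm{local}}$ are sound additions but not needed for the equivalence as stated.
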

\begin{proof}
We first prove the necessity. Assuming that \eqref{eq:decomp} holds, and using the definition of $p_{-\tau}$, we have
\begin{eqnarray*}
p_{-\tau}(y, x) & = & \frac{\rho_{0}(x) p_{\tau}(x, y)}{\int_{\mathbb{X}} \rho_{0}(x) p_{\tau}(x, y) \, \mathrm{d}x} \\
 & = & \frac{\rho_{0}(x) \tilde{p}_{\tau}^{D}(x, r(y)) p_{\mathrm{local}}(r(y), y)}{\int_{\mathbb{X}} \rho_{0}(x) \tilde{p}_{\tau}^{D}(x, r(y)) p_{\mathrm{local}}(r(y), y) \, \mathrm{d}x} \\
 & = & \frac{\rho_{0}(x) \tilde{p}_{\tau}^{D}(x, r(y))}{\int_{\mathbb{X}} \rho_{0}(x) \tilde{p}_{\tau}^{D}(x, r(y)) \, \mathrm{d}x}.
\end{eqnarray*}
Note that the right-hand side depends only on $x$ and $r(y)$, thereby establishing the necessity.

Next, we prove the sufficiency. Assuming that \eqref{eq:simple-decomp} holds, we have
\begin{eqnarray*}
p_{\tau}(x, y) & = & \frac{\rho_{1}(y) p_{-\tau}(y, x)}{\int_{\mathbb{X}} \rho_{1}(y) p_{-\tau}(y, x) \, \mathrm{d}y} \\
 & = & \frac{p_{-\tau}^{D}(r(y), x)}{\int_{\mathbb{X}} \rho_{1}(y) p_{-\tau}^{D}(r(y), x) \, \mathrm{d}y} \cdot \rho_{1}(y).
\end{eqnarray*}
Thus, the sufficiency is established.
\end{proof}

Based on the analysis above, we can infer that lumpability implies $r(X_t)$ is a sufficient descriptor for predicting the future state $X_{t+\tau}$ based on the state at time $t$. Similarly, decomposability implies that $r(X_{t+\tau})$ serves as a sufficient statistics for estimating the past state $X_t$ for given $X_{t+\tau}$. However, in practical systems, these criteria are typically not fully satisfied. To address this, we can assess the quality of the RC by quantifying errors in the low-dimensional representations of the transition densities given in \eqref{eq:lump} and \eqref{eq:simple-decomp}. These errors can then be minimized to further optimize the RC. In the following, we will use the flow matching approach to perform both evaluation and optimization.

\section{Flow matching for reaction coordinates}
\label{sec:FMRC}
\subsection{Fundamentals of flow matching}
Flow matching (FM) is an important flow-based generative model that effectively constructs a mapping from a simple distribution to a complex target distribution. There are various variants of FM \cite{albergo2023stochastic,lipman2022flow,liu2022rectified,tong2023improving}, but here we focus on the most basic one, also known as rectified flow \cite{liu2022rectified}.

Assume $Y^0$ is an initial random variable following a simple distribution (such as the standard Gaussian distribution), and $Y^1$ is a random variable following the complex target distribution (the data distribution). We can define a random path $Y^s$ from $Y^0$ to $Y^1$ as \footnote{To distinguish this from the physical time \( t \) in dynamical systems, we use the superscript \( s \) to denote the virtual time involved in FM.}
\[
Y^s = I(s;Y^0,Y^1) = (1-s)Y^0 + sY^1, \quad \text{for } s \in [0,1].
\]
This path is non-causal because the derivative \( \partial I(s;Y^0,Y^1)/\partial s = Y^1 - Y^0 \) requires knowledge of \( Y^1 \).
To overcome the non-causality, we can introduce the expected velocity field of \( I(s; Y^0, Y^1) \):
\[
v^*(s,y) = \mathbb{E}\left[Y^1 - Y^0 \mid Y^s = y\right],
\]
where \( v^*(y,s) \) represents the mean velocity of all random paths passing through the point \( y \) at time \( s \). Using this velocity field, we can construct the following deterministic, causal flow:
\[
\frac{\mathrm{d}}{\mathrm{d}s} Y^s = v^*(s, Y^s),
\]
An important result of FM is that when the initial value is set as the random variable \( Y^0 \), the final value obtained from this flow at $s=1$ has a distribution identical to the target distribution of $Y^1$.

In practical applications, a parameterized model (e.g., a neural network) $v$, can be used to approximate $v^*$, and the model can be trained by minimizing the following loss function:
\[
\int_0^1 \mathbb E\left[\Vert v(s, Y^s) - (Y^1 - Y^0) \Vert^2\right] \, \mathrm{d}s.
\]

The above result can also be extended to the modeling of conditional probability distributions. Suppose \( Y^1 \) and \( X \) are two dependent random variables, and we assume that the conditional distribution \( Y^0|X \) of the initial variable \( Y^0 \) is tractable.\footnote{A common choice for \( Y^0 \) in practice is a random variable following a standard Gaussian distribution, independent of both \( X \) and $Y^1$.} We can then solve the following optimization problem:
\[
v^{*} = \arg\min_{v}\int_{0}^{1}\mathbb{E}\left[\Vert v(s, Y^{s}, X) - (Y^{1} - Y^{0})\Vert^{2}\right]\,\mathrm{d}s.
\]
For any given \( X = x \), the optimal \( v^* \) can be used to transform the distribution of \( Y^0 \) into the conditional distribution \( Y^1|X = x \) through the flow defined by \( \mathrm{d}Y^s/\mathrm{d}s = v^*(s, Y^s, x) \). A detailed analysis of flow matching-based modeling for conditional distributions can be found in Subsection \ref{subsec:fmrc-error}.

\subsection{FMRC approach}\label{subsec:FMRC}
Building on the previous introduction to the FM model, we can accurately recover the densities $p_\tau$ and $p_{-\tau}$ using the vector fields $v_0^*, v_1^*:\mathbb R\times\mathbb R^D\times\mathbb R^D\to\mathbb R^D$, which are the solutions to the optimization problem:
\begin{equation}\label{eq:full-loss}
    \min _{v_0, v_1} \mathcal L[v_0,v_1]=\mathcal{L}_0\left[v_0\right] + \mathcal{L}_1\left[v_1\right],
\end{equation}
where
\begin{eqnarray*}
\mathcal{L}_{0}[v] & = & \mathbb{E}_{(X_{t},X_{t+\tau})\sim\rho,X_{t+\tau}^{\prime}\sim\mathcal{N}(\mathbf{0},\mathbf{I})}\left[\left\Vert v\left(s, X_{t+\tau}^{s}, X_{t}\right) - \left(X_{t+\tau} - X_{t+\tau}^{\prime}\right)\right\Vert ^{2}\right], \\
\mathcal{L}_{1}[v] & = & \mathbb{E}_{(X_{t},X_{t+\tau})\sim\rho,X_{t}^{\prime}\sim\mathcal{N}(\mathbf{0},\mathbf{I})}\left[\left\Vert v\left(s, X_{t}^{s}, X_{t+\tau}\right) - \left(X_{t} - X_{t}^{\prime}\right)\right\Vert ^{2}\right],
\end{eqnarray*}
$X_{t+\tau}^s = I\left(s, X_{t+\tau}', X_{t+\tau}\right)$, $X_{t}^s = I\left(s, X_{t}', X_{t}\right)$, and $\mathcal N(\mathbf 0,\mathbf I)$ denotes the standard Gaussian distribution.

It is worth noting that if the lumpability and decomposability conditions defined in Definition \ref{def:lump} and Lemma \ref{lem:simple-decomp} hold strictly with respect to a given RC $r$, then the conditional distributions $X_{t+\tau}|X_t$ and $X_t|X_{t+\tau}$ are identical to $X_{t+\tau}|r(X_t)$ and $X_t|r(X_{t+\tau})$, respectively. This implies that we can impose the bottlenecks defined by $r$ in the vector fields of the flow models for $p_\tau$ and $p_{-\tau}$, and solve the following problem:
\begin{equation}\label{eq:reduced-loss}
\min_{v_0^{\mathrm{RC}}, v_1^{\mathrm{RC}}} \mathcal{L}^{\mathrm{FMRC}}[v_0^{\mathrm{RC}}, v_1^{\mathrm{RC}},r] = \mathcal{L}_0\left[v_0^{\mathrm{RC}}(\cdot, \ast, r(\#))\right] + \mathcal{L}_1\left[v_1^{\mathrm{RC}}(\cdot, \ast, r(\#))\right],
\end{equation}
where $v_0^{\mathrm{RC}}, v_1^{\mathrm{RC}}: \mathbb{R} \times \mathbb{R}^D \times \mathbb{R}^d\to\mathbb R^d$. It can be seen that the resulting flow model actually estimates the conditional distributions $X_{t+\tau}|r(X_t)$ and $X_t|r(X_{t+\tau})$.

For the loss functions defined in \eqref{eq:full-loss} and \eqref{eq:reduced-loss}, we can establish the following result:
\begin{theorem}
Given an RC $r$,
\begin{enumerate}
\item $\min_{v_0^{\mathrm{RC}}, v_1^{\mathrm{RC}}} \mathcal{L}^{\mathrm{FMRC}}[v_0^{\mathrm{RC}}, v_1^{\mathrm{RC}},r] \ge \min _{v_0, v_1} \mathcal L[v_0,v_1]$,
\item The equality holds if and only if lumpability and decomposability are satisfied.
\end{enumerate}
\end{theorem}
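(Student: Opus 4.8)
The plan is to read \eqref{eq:reduced-loss} as the restriction of \eqref{eq:full-loss} to a smaller feasible set and then to make the resulting optimality gap explicit via an $L^2$-orthogonality (bias--variance) argument. For the inequality in part~1, observe that for $i\in\{0,1\}$ the assignment $v\mapsto v(\cdot,\ast,r(\#))$ sends an FMRC velocity field to a bona fide velocity field of the type used in \eqref{eq:full-loss}, namely one whose dependence on the conditioning state is measurable through $r$; since $\mathcal L^{\mathrm{FMRC}}[v_0^{\mathrm{RC}},v_1^{\mathrm{RC}},r]=\mathcal L_0[v_0^{\mathrm{RC}}(\cdot,\ast,r(\#))]+\mathcal L_1[v_1^{\mathrm{RC}}(\cdot,\ast,r(\#))]$ uses exactly the two functionals appearing in $\mathcal L=\mathcal L_0+\mathcal L_1$, minimizing over the smaller class can only raise the value.

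For part~2 I would compute the gap term by term. Fix $s\in[0,1]$, put $W=X_{t+\tau}-X'_{t+\tau}$, and set $\mathcal F_s=\sigma(X^s_{t+\tau},X_t)\supseteq\mathcal G_s=\sigma(X^s_{t+\tau},r(X_t))$. Minimizing $\mathcal L_0$ over all measurable $v$ is an $L^2$ projection, so its minimizer is a version of $\mathbb E[W\mid\mathcal F_s]$, the FMRC minimizer is $\mathbb E[W\mid\mathcal G_s]$, and Pythagoras gives $\mathbb E\|W-\mathbb E[W\mid\mathcal G_s]\|^2=\mathbb E\|W-\mathbb E[W\mid\mathcal F_s]\|^2+\mathbb E\|\mathbb E[W\mid\mathcal F_s]-\mathbb E[W\mid\mathcal G_s]\|^2$. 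Integrating over $s$, and running the symmetric computation for $\mathcal L_1$ with $W'=X_t-X'_t$, $\mathcal F'_s=\sigma(X^s_t,X_{t+\tau})$, $\mathcal G'_s=\sigma(X^s_t,r(X_{t+\tau}))$, shows that $\min\mathcal L^{\mathrm{FMRC}}-\min\mathcal L$ is the sum of two nonnegative quantities, so equality in part~1 holds iff $\mathbb E[W\mid\mathcal F_s]=\mathbb E[W\mid\mathcal G_s]$ and $\mathbb E[W'\mid\mathcal F'_s]=\mathbb E[W'\mid\mathcal G'_s]$ for a.e.\ $s$.

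It then remains to match these two identities with lumpability and decomposability. For the ``$\Leftarrow$'' direction, if lumpability~\eqref{eq:lump} holds then $\mathrm{Law}(X_{t+\tau}\mid X_t=x)$ depends on $x$ only through $r(x)$; since $X'_{t+\tau}$ is an independent standard Gaussian, so does $\mathrm{Law}((X^s_{t+\tau},W)\mid X_t=x)$, and the tower property yields $\mathbb E[W\mid\mathcal F_s]=\mathbb E[W\mid\mathcal G_s]$ for every $s$. By Lemma~\ref{lem:simple-decomp}, decomposability means the backward transition density $p_{-\tau}(y,\cdot)$ (the conditional density of $X_t$ given $X_{t+\tau}=y$) depends on $y$ only through $r(y)$, and the same reasoning applied to the backward pair gives the second identity. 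For the ``$\Rightarrow$'' direction, a.e.-$s$ equality of the conditional expectations says the flow-matching regression targets agree, i.e.\ $v_0^\ast(s,y,x)=v_0^{\mathrm{RC},\ast}(s,y,r(x))$ for a.e.\ $(s,y)$ and $\rho_0$-a.e.\ $x$ --- the almost-sure equality promotes to an almost-everywhere equality in $y$ because, for $s<1$, the interpolant $X^s_{t+\tau}\mid X_t=x$ inherits the full support of its Gaussian component. Invoking the exactness of the conditional flow matching construction (cf.\ Subsection~\ref{subsec:fmrc-error}), the flow driven by $v_0^\ast(\cdot,\cdot,x)$ from $\mathcal N(\mathbf0,\mathbf I)$ has terminal law $p_\tau(x,\cdot)$, while the flow driven by $v_0^{\mathrm{RC},\ast}(\cdot,\cdot,r(x))$ has terminal law $\mathrm{Law}(X_{t+\tau}\mid r(X_t)=r(x))$; equality of the drifts forces $p_\tau(x,\cdot)=\mathrm{Law}(X_{t+\tau}\mid r(X_t)=r(x))$, which depends on $x$ only through $r(x)$ --- that is, lumpability (Definition~\ref{def:lump}). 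The same argument for $v_1$ shows $p_{-\tau}(y,\cdot)$ depends only on $r(y)$, i.e.\ \eqref{eq:simple-decomp}, which by Lemma~\ref{lem:simple-decomp} is decomposability.

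I expect the ``$\Rightarrow$'' step to be the crux: one must upgrade an $L^2$ identity between \emph{flow-matching regression targets} into an identity between the \emph{underlying conditional laws}, and this is exactly where the correctness of the conditional flow matching construction is needed, together with the full-support property of the interpolated marginals that converts almost-sure equality of conditional expectations into genuine almost-everywhere equality of the drift fields. The remaining ingredients are routine: part~1 is a feasible-set inclusion, the gap formula is the law of total variance, and the ``$\Leftarrow$'' direction only uses that convolving with an independent Gaussian preserves measurability through $r$.
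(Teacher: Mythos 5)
Your proposal is correct and follows essentially the same route as the paper: part~1 is the feasible-set inclusion, and part~2 rests on identifying the optimal velocity fields with conditional expectations and using the exactness of the flow construction to pass between equality of drifts and equality of the conditional laws $X_{t+\tau}\mid X_t$ and $X_t\mid X_{t+\tau}$. Your explicit Pythagoras/law-of-total-variance decomposition of the gap and the attention to almost-everywhere issues (full support of the Gaussian-smoothed interpolant) simply make rigorous the step the paper asserts directly, namely that equality of the minima forces $v_0^{\mathrm{RC},*}(\cdot,\ast,r(\#))=v_0^*(\cdot,\ast,\#)$ and likewise for $v_1$.
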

\begin{proof}

The first conclusion is trivial to prove, so we will focus on proving the second conclusion.

First, let us assume that the equality in Conclusion 1 holds. This implies that the optimal velocity fields are given by
\[
v_{0}^{\mathrm{RC},*}\left(\cdot, \ast, r(\#)\right) = v_{0}^{*}\left(\cdot, \ast, \#\right),\quad v_{1}^{\mathrm{RC},*}\left(\cdot, \ast, r(\#)\right) = v_{1}^{*}\left(\cdot, \ast, \#\right).
\]
Consequently, we can use the following flow models to obtain the conditional distributions \( X_{t+\tau} \mid X_{t} = x \) and \( X_t \mid X_{t+\tau} = y \):
\begin{equation*}
\frac{\mathrm{d}X_{t+\tau}^{s}}{\mathrm{d}s} = v_{0}^{\mathrm{RC},*}\left(s, X_{t+\tau}^{s}, r(x)\right), \quad \frac{\mathrm{d}X_{t}^{s}}{\mathrm{d}s} = v_{1}^{\mathrm{RC},*}\left(s, X_{t}^{s}, r(y)\right).
\end{equation*}
Let \( p_\tau^L(r(x), \cdot) \) and \( p_{-\tau}^D(r(y), \cdot) \) denote the conditional distributions defined by these flow models. It follows that lumpability and decomposability hold.

Next, suppose that lumpability and decomposability are satisfied. Then we have:
\begin{eqnarray*}
v_{0}^{*}(s, y^{s}, x) & = & \mathbb{E}\left[X_{t+\tau} - X_{t+\tau}^{\prime} \mid X_{t+\tau}^{s} = y^{s}, X_{t} = x\right] \\
 & = & \mathbb{E}\left[X_{t+\tau} - X_{t+\tau}^{\prime} \mid X_{t+\tau}^{s} = y^{s}, r(X_{t}) = r(x)\right] \\
 & = & v_{0}^{\mathrm{RC},*}(s, y^{s}, r(x)),
\end{eqnarray*}
and
\begin{eqnarray*}
v_{1}^{*}(s, x^{s}, y) & = & \mathbb{E}\left[X_{t} - X_{t}^{\prime} \mid X_{t}^{s} = x^{s}, X_{t+\tau} = y\right] \\
 & = & \mathbb{E}\left[X_{t} - X_{t}^{\prime} \mid X_{t}^{s} = x^{s}, r(X_{t+\tau}) = r(y)\right] \\
 & = & v_{1}^{\mathrm{RC},*}\left(s, x^{s}, r(y)\right).
\end{eqnarray*}
Therefore, the equality in Conclusion 1 holds.
\end{proof}

Based on the theorem above, for a given RC \( r \), we can use \(\mathcal{L}^{\mathrm{FMRC}}[v_{0}^{\mathrm{RC},*}, v_{1}^{\mathrm{RC},*}, r]\) to assess whether it approximately satisfies lumpability and decomposability. Furthermore, by minimizing \(\mathcal{L}^{\mathrm{FMRC}}\), we can obtain the optimal RC \( r^* \) for the system. In practical applications, we can approximate \(v_{0}^{\mathrm{RC},*}, v_{1}^{\mathrm{RC},*}, r^*\) using parameterized models, such as neural networks, and solve the following optimization problem to find an approximate optimal RC:
\[
\min_{\theta} \mathcal{L}^{\mathrm{FMRC}}[v_{0,\theta}^{\mathrm{RC}}, v_{1,\theta}^{\mathrm{RC}}, r_{\theta}],
\]
where \(\theta\) denotes the model parameters.
In this paper, we refer to this method as \textbf{FMRC}.

The stochastic gradient descent-based training process of FMRC using the data set \(\{(x_n, y_n)\}_{n=1}^N\) is summarized as follows:
\begin{enumerate}
    \item Draw a random mini-batch \(\{(x_b, y_b)\}_{b=1}^B\) from the data set.
    \item For each \(b\), sample \(x_b^\prime, y_b^\prime \sim \mathcal{N}(\mathbf 0,\mathbf I)\) and \(s \sim \mathcal U(0,1)\).
    \item For each \(b\), compute \(x_b^s = I(s, x_b^\prime, x_b)\) and \(y_b^s = I(s, y_b^\prime, y_b)\).
    \item Calculate \(\hat{\mathcal{L}}^{\mathrm{FMRC}} = \hat{\mathcal{L}}_0 + \hat{\mathcal{L}}_1\), where
    \begin{eqnarray*}
    \hat{\mathcal{L}}_0 & = & \frac{1}{B}\sum_{b=1}^{B}\left\Vert v_{0,\theta}^{\mathrm{RC}}(s, y_b^s, r_\theta(x_b)) - (y_b - y_b^\prime)\right\Vert^2, \\
    \hat{\mathcal{L}}_1 & = & \frac{1}{B}\sum_{b=1}^{B}\left\Vert v_{1,\theta}^{\mathrm{RC}}(s, x_b^s, r_\theta(y_b)) - (x_b - x_b^\prime)\right\Vert^2.
    \end{eqnarray*}
    \item Update \(\theta\) as \(\theta \gets \theta - \eta \frac{\partial \hat{\mathcal{L}}}{\partial \theta}\), where \(\eta\) is the learning rate.
    \item Repeat Steps 1-5 until convergence.
\end{enumerate}

\remark The FMRC method focuses on obtaining a reaction coordinate modeled by an encoder, so generative purposes are not our primary concern. However, we can still generate data by solving the flow ODEs through our trained velocity fields. 

\section{Error analysis of FMRC}
\label{sec:error}
In this section, we analyze the error of FMRC from the perspective of transfer operators.

\subsection{Reduced-order transfer operators}
Here we first introduce the concept of reduced-order transfer operators.
\begin{definition}[Reduced-Order Transfer Operators]
For an RC \( r \), the reduced PF operator \(\mathcal{T}_D\) and the reduced Koopman operator \(\mathcal{K}_L\) are defined as follows:
\begin{eqnarray*}
\mathcal{K}_L h(x) & = & \mathbb{E}_{(X_t, X_{t+\tau}) \sim \rho}\left[h(X_{t+\tau}) \mid r(X_t) = r(x)\right], \\
\mathcal{T}_D h(x) & = & \mathbb{E}_{(X_t, X_{t+\tau}) \sim \rho}\left[h(X_t) \mid r(X_{t+\tau}) = r(x)\right].
\end{eqnarray*}
\end{definition}

According to the above definition, for any function \( h \), there exist functions \( g_L \) and \( g_D \) such that \(\mathcal{K}_L h(\cdot) = g_L(r(\cdot))\) and \(\mathcal{T}_D h(\cdot) = g_D(r(\cdot))\). In other words, for any function \( h: \mathbb{R}^D \to \mathbb{R} \), the action of the reduced-order transfer operators transforms it into a function that is intrinsically defined on the reaction coordinate space \(\{r(x) \mid x \in \mathbb{X}\}\).

We can then express lumpability and decomposability in terms of operators:
\begin{lemma}
Lumpability holds if and only if \(\mathcal{K}_L = \mathcal{K}_\tau\), and decomposability holds if and only if \(\mathcal{T}_D = \mathcal{T}_\tau\).
\end{lemma}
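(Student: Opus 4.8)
The plan is to establish the two equivalences in parallel, using the observation that a Markov kernel is lumpable precisely when its action on every observable factors through the RC, together with the law of total expectation. I would start with lumpability. For the ``only if'' direction, assume $p_\tau(x,y)=p_\tau^L(r(x),y)$. Then for every bounded measurable $h$ we have $\mathcal{K}_\tau h(x)=\int_{\mathbb{X}}p_\tau^L(r(x),y)h(y)\,\mathrm{d}y$, which depends on $x$ only through $r(x)$; equivalently $\mathcal{K}_\tau h(X_t)=\mathbb{E}[h(X_{t+\tau})\mid X_t]$ is $\sigma(r(X_t))$-measurable. Since $\sigma(r(X_t))\subseteq\sigma(X_t)$, the tower property gives $\mathcal{K}_L h(x)=\mathbb{E}[h(X_{t+\tau})\mid r(X_t)=r(x)]=\mathbb{E}[\mathcal{K}_\tau h(X_t)\mid r(X_t)=r(x)]=\mathcal{K}_\tau h(x)$, so $\mathcal{K}_L=\mathcal{K}_\tau$. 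For the ``if'' direction, assume $\mathcal{K}_L=\mathcal{K}_\tau$, so $\mathcal{K}_\tau h(\cdot)$ factors through $r$ for every $h$; taking $h=\mathbf{1}_B$ for $B\in\mathcal{B}$ shows $P_\tau(x,B)=\int_B p_\tau(x,y)\,\mathrm{d}y$ depends on $x$ only through $r(x)$, and letting $B$ range over a countable generating subalgebra of $\mathcal{B}$ forces $p_\tau(x,\cdot)\in L^1(\mathbb{X})$ to depend only on $r(x)$, which is exactly the existence of $p_\tau^L$ in Definition \ref{def:lump}.

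For decomposability I would first invoke Lemma \ref{lem:simple-decomp} to replace decomposability by the condition $p_{-\tau}(y,x)=p_{-\tau}^D(r(y),x)$ on the backward kernel. Writing $\mathcal{T}_\tau u(y)=\int_{\mathbb{X}}p_{-\tau}(y,x)u(x)\,\mathrm{d}x=\mathbb{E}[u(X_t)\mid X_{t+\tau}=y]$, the argument of the previous paragraph transfers verbatim under the substitutions $X_t\leftrightarrow X_{t+\tau}$, $p_\tau\leftrightarrow p_{-\tau}$, $\mathcal{K}_\tau\leftrightarrow\mathcal{T}_\tau$, $\mathcal{K}_L\leftrightarrow\mathcal{T}_D$: if $p_{-\tau}(y,\cdot)$ factors through $r(y)$ then so does $\mathcal{T}_\tau u$, and conditioning over $\sigma(r(X_{t+\tau}))$ yields $\mathcal{T}_D=\mathcal{T}_\tau$; conversely $\mathcal{T}_D=\mathcal{T}_\tau$ tested against indicators recovers that $p_{-\tau}(y,\cdot)$ depends only on $r(y)$, hence decomposability via Lemma \ref{lem:simple-decomp}. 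Here the integrability is automatic because $p_{-\tau}(y,\cdot)$ is a probability density, so indicator test functions are admissible.

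The one point requiring care is measure-theoretic rather than conceptual: recovering $p_\tau=p_\tau^L\circ(r\times\mathrm{id})$ and $p_{-\tau}=p_{-\tau}^D\circ(r\times\mathrm{id})$ from the operator identities only yields these equalities for $\lambda$-a.e.\ value of the second argument, whereas Definition \ref{def:lump} is phrased pointwise. I would resolve this either by reading the densities as $L^1$ representatives (the natural setting for the transfer operators) or, when convenient, by upgrading to the ``for all $x,y$'' form using the standing regularity of $r$ and $p_\tau$. I expect this bookkeeping, and the choice of a sufficiently rich determining class of test functions ($L^\infty$ observables for $\mathcal{K}$, Borel indicators for $\mathcal{T}$), to be the only genuine obstacle; the rest is a direct application of the tower property.
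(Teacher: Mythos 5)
Your argument is correct and is precisely the ``direct from the definitions'' reasoning that the paper invokes but omits: the tower property gives $\mathcal{K}_L h = \mathcal{K}_\tau h$ once $\mathcal{K}_\tau h$ factors through $r$, and testing the operator identity on indicator observables recovers the factorization of $p_\tau(x,\cdot)$ (resp.\ $p_{-\tau}(y,\cdot)$ via Lemma~\ref{lem:simple-decomp}) through $r$. Your closing remark about the almost-everywhere versus pointwise reading of Definition~\ref{def:lump} is a fair caveat, but it does not change the substance of the equivalence.
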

\begin{proof}
The result follows directly from the definition of the reduced-order transfer operators and is therefore omitted.
\end{proof}

It should be noted that FMRC provides approximate reduced-order transfer operators, which are defined as:
\begin{eqnarray*}
\hat{\mathcal{K}}_L h(x) & = & \mathbb{E}\left[h(\hat{X}_{t+\tau}) \mid r(X_t) = r(x)\right], \\
\hat{\mathcal{T}}_D h(x) & = & \mathbb{E}\left[h(\hat{X}_t) \mid r(X_{t+\tau}) = r(x)\right],
\end{eqnarray*}
where \(\hat{X}_{t+\tau}\) and \(\hat{X}_t\) are generated by the following flow models:
\begin{eqnarray}
\frac{\mathrm{d}X_{t+\tau}^{s}}{\mathrm{d}s} & = & v_{0,\theta}^{\mathrm{RC}}(s,X_{t+\tau}^{s},r(x)),\label{eq:flow-K}\\
\frac{\mathrm{d}X_{t}^{s}}{\mathrm{d}s} & = & v_{1,\theta}^{\mathrm{RC}}(s,X_{t}^{s},r(y)).
\end{eqnarray}
Therefore, by comparing \(\hat{\mathcal{K}}_L\) and \(\hat{\mathcal{T}}_D\) with the full-state transfer operators \(\mathcal{K}_\tau\) and \(\mathcal{T}_\tau\), we can analyze the impact of FMRC-induced errors.

\subsection{Wasserstein-2 distance}
For the sake of analysis, we briefly introduce the Wasserstein-2 distance and its relevant properties according to \cite{peyre2018comparison}.

\begin{definition}[Quadratic Wasserstein distance, or $W_2$ distance]
    Let $M$ be a connected Riemannian manifold with its distance $d(\cdot,\cdot)$. For two probability measures $u, v$ on $M$, denoting by $\Pi(u,v)$ the set of all couplings of $u$ and $v$, for $\gamma \in\Pi(u,v)$ one defines
    \[
    I(\pi):=\int_{M \times M} d(x, y)^2 \gamma(\mathrm{d} x, \mathrm{d} y)
    \]
    and then
    \[
    \mathrm{W}_2(\mu,\nu):=\inf \{I(\gamma) \mid \gamma \in \Pi(\mu,\nu)\}^{1 / 2}
    \]
    $W_2$ is a (possibly infinite) distance called the quadratic Wasserstein distance, or simply $W_2$ distance.
\end{definition}

The $W_2$ distance is a metric defined between probability distributions and is closely related to the optimal transport problem (see, e.g., \cite{villani2009optimal}). Despite its significance, the $W_2$ distance is nonlinear and therefore challenging to learn. However, under infinitesimal perturbations, the $W_2$ distance exhibits linearized behavior. To illustrate this in detail, we first define a semi-norm in the Sobolev space.

\begin{definition}    
    If $m$ is a positive measure on $M$, $f$ is a $C^1$ real function on $M$, one denotes
    \[
    \|f\|_{\dot{\mathrm{H}}^1(m)}:=\left(\int_M|\nabla f(x)|^2 m(\mathrm{d} x)\right)^{1 / 2}
    \]
    which defines a semi-norm; for $\nu$ a signed measure on $M$, one then denotes
    \[
    \|\nu\|_{\dot{H}^{-1}(m)}\colon =\sup \left\{|\langle f, \nu\rangle| \mid\|f\|_{\dot{H}^1(m)} \leq 1\right\},
    \]
    where the duality product $\left<f,\nu\right>$ denotes the integral of the function $f$ against the measure $\nu$. We observe that $\|\cdot\|_{\dot{H}^{-1}(m)}$ defines a (possibly infinite) norm, which we will call the $\dot{H}^{-1}(m)$ \textit{weighted homogeneous Sobolev norm}. In this article we use the standard Lebesgue measure on $\mathbb{X}$ so we denote $\dot{H}^{-1}(m)$ simply as $\dot{H}^{-1}$.
\end{definition}

The \textit{Benamou-Brenier formula} explain the relationship between $W_2$ distance and $\dot{H}^{-1}$ norm. For measure $\mu$ on $M$, let $\mathrm{d}\mu$ denote an infinitesimally small perturbation on $\mu$. One has
\begin{equation}
    W_2(\mu,\mu+\mathrm{d}\mu) = \|\mathrm{d}\mu\|_{\dot{H}^{-1}(\mu)} + o(\mathrm{d}\mu)
\end{equation}
i.e., for two measures $\mu,\nu$ on $M$,
\begin{equation}
\mathrm{W}_2(\mu, \nu)=\inf \left\{\int_0^1\|\mathrm{~d} \mu\|_{\dot{\mathrm{H}}^{-1}(\mu_t)} \mid \mu_0=\mu, \mu_1=\nu\right\} .
\end{equation}

There exists an equivalence relation between $W^2$ distance and $\dot{H}^{-1}$ norm. According to \cite{loeper2006uniqueness} we have the following result.  
\begin{lemma}\label{H1-W2}
For any positive measure $\mu$ and $\nu$ on $M$, denote $\lambda$ standard measure provided by the volume form (in the case of $M=\mathbb{R}^n$, $\lambda$ is the Lebesgue measure). If $\mu,\nu\leq \beta\lambda$ for some $\beta\geq 0$, then
\[
    \|\mu-\nu\|_{\dot{\mathrm{H}}^{-1}} \leq \sqrt{\beta} \mathrm{W}_2(\mu, \nu)
\]
\end{lemma}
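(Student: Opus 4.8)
The plan is to establish the inequality $\|\mu-\nu\|_{\dot{H}^{-1}} \le \sqrt{\beta}\, W_2(\mu,\nu)$ by combining the Benamou–Brenier formula for the $W_2$ distance with a pointwise comparison of the two Sobolev norms $\dot H^{-1}(\mu_t)$ and $\dot H^{-1} = \dot H^{-1}(\lambda)$ along an optimal transport path. I would first invoke the displayed Benamou–Brenier characterization $W_2(\mu,\nu) = \inf\{\int_0^1 \|\mathrm d\mu_t\|_{\dot H^{-1}(\mu_t)} : \mu_0 = \mu,\ \mu_1 = \nu\}$, so that the task reduces to showing, for any admissible interpolating path $\{\mu_t\}$, that $\|\mu-\nu\|_{\dot H^{-1}}$ is controlled by $\sqrt\beta$ times this path integral.

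**Step one: the pointwise norm comparison.** For a fixed signed measure $\sigma$ and two weights $m_1 \le m_2$, one has $\|\sigma\|_{\dot H^{-1}(m_2)} \le \|\sigma\|_{\dot H^{-1}(m_1)}$ — intuitively, a larger reference measure makes the unit $\dot H^1$-ball smaller, hence the dual norm larger; I would make this precise from the definitions $\|f\|_{\dot H^1(m)}^2 = \int|\nabla f|^2\,\mathrm dm$ and $\|\sigma\|_{\dot H^{-1}(m)} = \sup\{|\langle f,\sigma\rangle| : \|f\|_{\dot H^1(m)} \le 1\}$. Applying this with $m_1 = \mu_t \le \beta\lambda = m_2$ along the path (noting that the path can be chosen so that each $\mu_t$ also satisfies $\mu_t \le \beta\lambda$, since the geodesic interpolant between two measures bounded by $\beta\lambda$ is again bounded by $\beta\lambda$ by displacement convexity of the $L^\infty$ constraint) gives $\|\mathrm d\mu_t\|_{\dot H^{-1}(\beta\lambda)} \le \|\mathrm d\mu_t\|_{\dot H^{-1}(\mu_t)}$. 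A simple scaling identity $\|\cdot\|_{\dot H^{-1}(\beta\lambda)} = \tfrac{1}{\sqrt\beta}\|\cdot\|_{\dot H^{-1}(\lambda)} = \tfrac{1}{\sqrt\beta}\|\cdot\|_{\dot H^{-1}}$ then converts this into $\|\mathrm d\mu_t\|_{\dot H^{-1}} \le \sqrt\beta\,\|\mathrm d\mu_t\|_{\dot H^{-1}(\mu_t)}$.

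**Step two: integrate along the path.** Using the triangle inequality for the (flat, constant-weight) $\dot H^{-1}$ norm, I would write $\|\mu - \nu\|_{\dot H^{-1}} = \|\mu_1 - \mu_0\|_{\dot H^{-1}} \le \int_0^1 \|\mathrm d\mu_t\|_{\dot H^{-1}}$, and then bound the integrand by Step one to obtain $\|\mu-\nu\|_{\dot H^{-1}} \le \sqrt\beta \int_0^1\|\mathrm d\mu_t\|_{\dot H^{-1}(\mu_t)}$. Taking the infimum over all admissible paths $\{\mu_t\}$ and applying Benamou–Brenier yields $\|\mu-\nu\|_{\dot H^{-1}} \le \sqrt\beta\, W_2(\mu,\nu)$, as claimed. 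Since the statement is quoted from \cite{loeper2006uniqueness}, I could alternatively just cite it; but the argument above is the natural self-contained route.

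**The main obstacle** I anticipate is the technical justification in Step one that the interpolating path $\{\mu_t\}$ realizing (or nearly realizing) the $W_2$ infimum can be taken to satisfy the uniform density bound $\mu_t \le \beta\lambda$ for all $t$ — this is a displacement-convexity (geodesic convexity of the $L^\infty$ norm along McCann interpolation) fact that needs either a citation to the optimal transport literature (e.g. \cite{villani2009optimal}) or a short argument using the explicit pushforward structure $\mu_t = ((1-t)\mathrm{id} + t\nabla\varphi)_\#\mu$ of geodesics and the change-of-variables / Jacobian inequality. The rest — the dual-norm monotonicity in the weight and the $\sqrt\beta$ scaling — is routine once the definitions are unwound, and the integration step is immediate from the triangle inequality.
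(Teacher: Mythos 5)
The paper does not actually prove this lemma; it is quoted directly from \cite{loeper2006uniqueness}, so there is no internal proof to compare against. Your argument is essentially the standard proof of Loeper's estimate — Benamou--Brenier, monotonicity of the dual norm in the reference weight, the $\sqrt{\beta}$ scaling identity, and the triangle inequality along the geodesic — and all of your displayed inequalities are correct. Two remarks. First, the parenthetical intuition in your Step one is stated backwards: enlarging the reference measure shrinks the $\dot{H}^1$ unit ball and therefore makes the dual norm \emph{smaller}, which is exactly your displayed inequality $\|\sigma\|_{\dot{H}^{-1}(m_2)}\le\|\sigma\|_{\dot{H}^{-1}(m_1)}$ for $m_1\le m_2$; since the inequality you actually apply is the correct one, this is only a slip in the prose, but it should be fixed. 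Second, the obstacle you identify — that the interpolating path can be chosen with $\mu_t\le\beta\lambda$ uniformly in $t$ — is indeed the one genuinely nontrivial ingredient: an arbitrary near-minimizing path need not satisfy the density bound, so you must work with the McCann geodesic and invoke the bound $\|\rho_t\|_{L^\infty}\le\max(\|\rho_0\|_{L^\infty},\|\rho_1\|_{L^\infty})$, which on $\mathbb{R}^n$ follows from the concavity of $\det^{1/n}$ along $(1-t)I+t\nabla^2\varphi$ (displacement convexity of the $L^\infty$ constraint). With that fact cited (e.g.\ to \cite{villani2009optimal}) or proved, your argument is complete and coincides with the proof in the cited reference.
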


\subsection{FMRC error}\label{subsec:fmrc-error}
Returning to the FMRC method, we first prove the following results.

\begin{lemma}\label{drift_W2}
If the true and approximate velocity fields \( v_{0}^{\mathrm{RC},*} \) and \( v_{0,\theta}^{\mathrm{RC}} \) in FMRC satisfy
\[
\int_{0}^{1}\mathbb{E}\left[\left\Vert v_{0}^{\mathrm{RC},*}(s,X_{t+\tau}^{s},X_{t})-v_{0,\theta}^{\mathrm{RC}}(s,X_{t+\tau}^{s},X_{t})\right\Vert ^{2}\right]\mathrm{d}s\leq\varepsilon^{2},
\]
and \( v_{0,\theta}^{\mathrm{RC}}(s, y^s, x) \) is \( L_s \)-Lipschitz in \((y^s, x)\), then the \( W_2 \) distance between joint distributions of \((X_t, X_{t+\tau})\) and \((X_t, \hat{X}_{t+\tau})\) is bounded by \( \varepsilon e^{\int_0^1 L_s \mathrm{d} s} \).
\end{lemma}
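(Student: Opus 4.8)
The plan is to couple the exact and approximate flow ODEs through a shared initialization and to propagate the velocity-field error along the trajectory by a Grönwall estimate, converting the $L^2$ drift error into an $L^2$ bound on the endpoint gap, which then upper-bounds the desired $W_2$ distance. Concretely, fix a realization of $X_t$ together with a single Gaussian seed $X_{t+\tau}'\sim\mathcal N(\mathbf 0,\mathbf I)$, and let $Y^s$ solve $\frac{\mathrm d Y^s}{\mathrm d s}=v_0^{\mathrm{RC},*}(s,Y^s,r(X_t))$ and $\hat Y^s$ solve $\frac{\mathrm d \hat Y^s}{\mathrm d s}=v_{0,\theta}^{\mathrm{RC}}(s,\hat Y^s,r(X_t))$, both started from $Y^0=\hat Y^0=X_{t+\tau}'$ (the third slot is read as $r(X_t)$, matching the type $\mathbb R\times\mathbb R^D\times\mathbb R^d\to\mathbb R^d$). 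By the defining property of rectified flow \cite{liu2022rectified}, conditionally on $r(X_t)$ the law of $Y^s$ equals that of the interpolant $X_{t+\tau}^s=I(s,X_{t+\tau}',X_{t+\tau})$ for each $s\in[0,1]$; in particular $Y^1$ has conditional law $X_{t+\tau}\mid r(X_t)$ (which coincides with $X_{t+\tau}\mid X_t$ under lumpability, and which we keep denoting $X_{t+\tau}$), while $\hat Y^1=\hat X_{t+\tau}$. Since the first coordinate $X_t$ is shared, $(X_t,Y^1)$ and $(X_t,\hat Y^1)$ form an admissible coupling, so $W_2^2\big((X_t,X_{t+\tau}),(X_t,\hat X_{t+\tau})\big)\le\mathbb E\big[\Vert Y^1-\hat Y^1\Vert^2\big]$.

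For the Grönwall step, set $\delta^s=Y^s-\hat Y^s$, so $\delta^0=0$. Using $\frac{\mathrm d}{\mathrm d s}\Vert\delta^s\Vert\le\Vert\dot\delta^s\Vert$ (justified via $\frac{\mathrm d}{\mathrm d s}\Vert\delta^s\Vert^2=2\langle\delta^s,\dot\delta^s\rangle$, or by mollifying the norm near its zeros), the triangle inequality, and the $L_s$-Lipschitz hypothesis on $v_{0,\theta}^{\mathrm{RC}}$, one gets
\[
\frac{\mathrm d}{\mathrm d s}\Vert\delta^s\Vert\le g^s+L_s\Vert\delta^s\Vert,\qquad g^s:=\big\Vert v_0^{\mathrm{RC},*}(s,Y^s,r(X_t))-v_{0,\theta}^{\mathrm{RC}}(s,Y^s,r(X_t))\big\Vert .
\]
The integrating-factor form of Grönwall's inequality then yields $\Vert\delta^1\Vert\le\int_0^1 e^{\int_u^1 L_\sigma\,\mathrm d\sigma}g^u\,\mathrm d u\le e^{\int_0^1 L_\sigma\,\mathrm d\sigma}\int_0^1 g^u\,\mathrm d u$ (using $L_\sigma\ge0$), and Cauchy–Schwarz on $[0,1]$ gives $\Vert\delta^1\Vert^2\le e^{2\int_0^1 L_\sigma\,\mathrm d\sigma}\int_0^1 (g^u)^2\,\mathrm d u$.

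Taking expectations and applying Fubini, $\mathbb E\Vert\delta^1\Vert^2\le e^{2\int_0^1 L_\sigma\,\mathrm d\sigma}\int_0^1\mathbb E[(g^u)^2]\,\mathrm d u$. Because, conditionally on $r(X_t)$, $Y^u$ has the same law as $X_{t+\tau}^u$, and both velocity fields depend on $X_t$ only through $r(X_t)$, the expectation of $(g^u)^2$ is unchanged when $Y^u$ is replaced by $X_{t+\tau}^u$; hence $\int_0^1\mathbb E[(g^u)^2]\,\mathrm d u=\int_0^1\mathbb E\big[\Vert v_0^{\mathrm{RC},*}(u,X_{t+\tau}^u,r(X_t))-v_{0,\theta}^{\mathrm{RC}}(u,X_{t+\tau}^u,r(X_t))\Vert^2\big]\mathrm d u\le\varepsilon^2$ by hypothesis. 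Combining the displays gives $W_2\big((X_t,X_{t+\tau}),(X_t,\hat X_{t+\tau})\big)\le\varepsilon\,e^{\int_0^1 L_s\,\mathrm d s}$, as claimed.

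The step I expect to be the main obstacle is the transfer in the last paragraph: the hypothesis controls the drift discrepancy only along the straight-line path $X_{t+\tau}^s$, whereas the Grönwall bound naturally produces an error integrated along the true flow trajectory $Y^s$, so one must invoke the marginal-preservation property of flow matching (applied conditionally on $r(X_t)$) together with the fact that the integrands factor through $r(X_t)$. Secondary technical points—existence and uniqueness of the exact flow $Y^s$ (the approximate one is covered by the Lipschitz hypothesis) and the non-smoothness of $s\mapsto\Vert\delta^s\Vert$ at its zeros—are standard, and I would dispatch them with the usual regularity assumption and a mollification argument, respectively.
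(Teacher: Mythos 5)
Your strategy---couple the exact and approximate flows through a shared Gaussian seed and a shared first coordinate $X_t$, run Gr\"onwall on the trajectory gap, and transfer the drift-error hypothesis from the interpolant path to the ODE path via marginal preservation---is exactly the content of the result the paper invokes: the paper's proof simply defines the augmented fields $\bar v=(0,v)$ acting on $(X_t,\cdot)$ (which is your ``shared first coordinate'' coupling in disguise) and cites Benton et al.\ for the bound $\varepsilon e^{\int_0^1 L_s\,\mathrm{d}s}$. So in substance you have reproduced the paper's argument with the citation unpacked; the Gr\"onwall/Cauchy--Schwarz/Fubini chain and the marginal-transfer step at the end are correct and are precisely the ``main obstacle'' you identified.

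One point needs fixing. You take the ``true'' flow to be driven by $v_0^{\mathrm{RC},*}(s,\cdot,r(X_t))$, so your $Y^1$ has conditional law $\mathrm{Law}(X_{t+\tau}\mid r(X_t))$ and is conditionally independent of $X_t$ given $r(X_t)$; hence $(X_t,Y^1)$ has the joint law of $(X_t,X_{t+\tau})$ only under lumpability---which you acknowledge parenthetically and then suppress by ``keep denoting $X_{t+\tau}$''. As written, this establishes the lemma only under an extra hypothesis it does not assume: with $\varepsilon=0$ your bound would force $\hat X_{t+\tau}\mid X_t\stackrel{d}{=}X_{t+\tau}\mid X_t$, which fails for a generic RC. The reading consistent with the lemma's conclusion (and with its use in the proof of the main theorem, where the bound must control all of $\mathcal K_\tau-\hat{\mathcal K}_L$, bottleneck error included) is that the hypothesis compares the \emph{unreduced} optimal field $v_0^{*}(s,\cdot,X_t)$---note the third argument in the displayed hypothesis is $X_t$, not $r(X_t)$---against $v_{0,\theta}^{\mathrm{RC}}(s,\cdot,r(X_t))$. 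With that reading your argument goes through verbatim and unconditionally: marginal preservation conditional on $X_t$ gives $(X_t,Y^1)\stackrel{d}{=}(X_t,X_{t+\tau})$ exactly, and the transfer step in your final paragraph conditions on $X_t$ rather than $r(X_t)$. Everything else stands.
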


\begin{proof}
Define the augmented velocity fields:
\[
\bar{v}_{0}^{\mathrm{RC},*}(s, y^s, x) = \left(\begin{array}{c}
0\\
v_{0}^{\mathrm{RC},*}(s, y^s, x)
\end{array}\right),
\quad
\bar{v}_{0,\theta}^{\mathrm{RC}}(s, y^s, x) = \left(\begin{array}{c}
0\\
v_{0,\theta}^{\mathrm{RC}}(s, y^s, x)
\end{array}\right).
\]
It is straightforward to verify that the flow models defined by \(\bar{v}_{0}^{\mathrm{RC},*}\) and \(\bar{v}_{0,\theta}^{\mathrm{RC}}\) can transform the distribution of $(X_t, X_{t+\tau}^\prime)$ into distributions of $(X_t, X_{t+\tau})$ and $(X_t,\hat X_{t+\tau})$. Then, according to \cite{benton2023error}, we can prove the conclusion.
\end{proof}

\begin{lemma}\label{drift1-W2}
If
\[
\int_{0}^{1}\mathbb{E}\left[\left\Vert v_{1}^{\mathrm{RC},*}(s,X_{t}^{s},X_{t+\tau})-v_{1,\theta}^{\mathrm{RC}}(s,X_{t}^{s},X_{t+\tau})\right\Vert ^{2}\right]\mathrm{d}s\leq\varepsilon^{2},
\]
and $v_{1,\theta}^{\mathrm{RC}}(s,x^{s},y)$ is $L_{s}$-Lipschitz
in $(x^{s},y)$, then the $W_{2}$ distance between $(X_{t},X_{t+\tau})$
and $(\hat{X}_{t},X_{t+\tau})$ is also bounded by $\varepsilon e^{\int_{0}^{1}L_{s}\mathrm{d}s}$. 
\end{lemma}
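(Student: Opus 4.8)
The plan is to mirror the proof of Lemma~\ref{drift_W2}, interchanging the two time slices so that $X_{t+\tau}$ now plays the role of the frozen conditioning variable and $X_t$ is the variable generated by the flow $\mathrm{d}X_t^s/\mathrm{d}s = v_{1,\theta}^{\mathrm{RC}}(s, X_t^s, r(y))$. First I would form the augmented velocity fields on $\mathbb{R}^{2D}$,
\[
\bar{v}_{1}^{\mathrm{RC},*}(s, x^s, y) = \left(\begin{array}{c} v_{1}^{\mathrm{RC},*}(s, x^s, y) \\ 0 \end{array}\right),
\qquad
\bar{v}_{1,\theta}^{\mathrm{RC}}(s, x^s, y) = \left(\begin{array}{c} v_{1,\theta}^{\mathrm{RC}}(s, x^s, y) \\ 0 \end{array}\right),
\]
where the null block corresponds to the $y$-coordinate. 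Started from the distribution of $(X_t', X_{t+\tau})$ with $X_t' \sim \mathcal{N}(\mathbf{0}, \mathbf{I})$ independent of $X_{t+\tau}$, the flow generated by $\bar v_1^{\mathrm{RC},*}$ leaves the second block equal to $X_{t+\tau}$ for all $s$ and transports the first block exactly as the one-sided flow model does, so at $s=1$ it pushes this initial law onto the joint law of $(X_t, X_{t+\tau})$; the same construction with $\bar v_{1,\theta}^{\mathrm{RC}}$ produces the joint law of $(\hat X_t, X_{t+\tau})$. This transport property is verified exactly as in the proof of Lemma~\ref{drift_W2}.

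Second, I would check that the hypotheses required by the stability estimate of \cite{benton2023error} are preserved under this augmentation. The integrated squared $L^2$ gap between $\bar v_1^{\mathrm{RC},*}$ and $\bar v_{1,\theta}^{\mathrm{RC}}$ equals that between $v_1^{\mathrm{RC},*}$ and $v_{1,\theta}^{\mathrm{RC}}$, since the appended coordinate contributes nothing, and is therefore at most $\varepsilon^2$ by assumption; likewise, padding with a constant (zero) block does not enlarge the Lipschitz constant, so $\bar v_{1,\theta}^{\mathrm{RC}}(s, \cdot, \cdot)$ remains $L_s$-Lipschitz on $\mathbb{R}^{2D}$. Since the two augmented flows share the same initial distribution, applying the Gr\"onwall-type drift-perturbation bound of \cite{benton2023error} yields that the $W_2$ distance between their terminal laws, namely between $(X_t, X_{t+\tau})$ and $(\hat X_t, X_{t+\tau})$, is at most $\varepsilon\, e^{\int_0^1 L_s\, \mathrm{d}s}$, which is the claim.

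The part deserving the most care --- though it is genuinely routine once unwound --- is matching the precise statement of \cite{benton2023error}: confirming that its bound is stated for non-autonomous Lipschitz velocity fields with the time-averaged Lipschitz constant $\int_0^1 L_s\,\mathrm{d}s$ appearing in the exponent, that it only requires control of the drift mismatch in $L^2$ along one of the two flows, and that its well-posedness prerequisites (existence and uniqueness of the ODE flows, measurability of the fields) hold in our setting. All of these are inherited verbatim from the use already made of \cite{benton2023error} in Lemma~\ref{drift_W2}, so no new idea beyond the time-reversal symmetry of the FMRC construction is needed.
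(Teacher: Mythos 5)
Your proposal is correct and is precisely the argument the paper intends: it states only that the proof ``is similar to that of Lemma~\ref{drift_W2} and is therefore omitted,'' and your augmentation of the velocity field (with the zero block now on the $X_{t+\tau}$ coordinate), the verification that the $L^2$ drift gap and Lipschitz constant are unchanged by the padding, and the appeal to the Gr\"onwall-type bound of \cite{benton2023error} are exactly the mirrored steps that ``similar'' refers to. No discrepancy with the paper's route.
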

\begin{proof}
The proof is similar to that of Lemma \ref{drift_W2} and is therefore omitted.
\end{proof}

We then provide our main result on errors of transfer operators.

\begin{theorem}\label{W2-mainthm}
Suppose the velocity fields satisfy conditions in \cref{drift_W2} and \cref{drift1-W2}, with velocity fields error $\varepsilon,\  \Tilde{\varepsilon}$ and Lipschitz constants $L_s,\ \Tilde{L}_s$. Suppose the distributions of $(X_t,X_{t+\tau})$ 
and its approximation $(X_t,\hat{X}_{t+\tau})$, as well as $(X_t,X_{t+\tau})$ and $(\hat{X}_t,X_{t+\tau})$, both satisfy conditions in \cref{H1-W2} with constants $\beta, \Tilde{\beta}$. We denote by $H^1_\rho$ the standard Sobolev space with norm
\[
\|f(x)\|_{H^1_\rho} = \left(\int \left(\|f(x)\|^2+\|\nabla f(x)\|^2\right)\rho(x)\mathrm{d}x\right)^{1/2}
\]
Define the weak form of operator error by inner product
\begin{equation}
    \begin{gathered}
        \|\mathcal{K}_\tau -\hat{\mathcal{K}}_L\|_H := \sup_{\|g(x)\|_{H^1_{\rho_0}}\leq 1, \|h(y)\|_{H^1_{\rho_1}\leq 1}}  \left<g(x), \left(\mathcal{K}_\tau h-\hat{\mathcal{K}}_L h \right)(x)\right>_{\rho_0}\\
        \|\mathcal{T}_\tau -\hat{\mathcal{T}}_D\|_H := \sup_{\|\Tilde{g}(x)\|_{H^1_{\rho_0}\leq 1}, \|\Tilde{h}(y)\|_{H^1_{\rho_1}}\leq 1}  \left<\Tilde{h}(y), \left(\mathcal{T}_\tau \Tilde{g}-\hat{\mathcal{T}}_L \Tilde{g}\right)(y)\right>_{\rho_1}
    \end{gathered}
\end{equation}
where $h,\Tilde{h}\in H_{\rho_1}^1,\ \Tilde{g},g\in H_{\rho_0}^1$. The operator errors are
\[
\begin{aligned}
    \|\mathcal{K}_\tau -\hat{\mathcal{K}}_L\|_H \leq \sqrt{\beta}\varepsilon e^{\int_0^1 L_s \mathrm{~d} s}\\
    \|\mathcal{T}_\tau -\hat{\mathcal{T}}_D\|_H \leq \sqrt{\Tilde{\beta}}\Tilde{\varepsilon} e^{\int_0^1 \Tilde{L}_s \mathrm{~d} s}
\end{aligned}
\]
\end{theorem}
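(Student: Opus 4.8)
The plan is to write each weak operator error as a single duality pairing of a tensor-product test function against the difference of two joint laws on $\mathbb X\times\mathbb X$, and then to chain the three ingredients already available: the definition of the $\dot H^{-1}$ dual norm, Loeper's inequality (\cref{H1-W2}), and the velocity-field-to-$W_2$ estimates (\cref{drift_W2}, \cref{drift1-W2}). For the Koopman part I would first establish the identity
\[
\langle g,\mathcal K_\tau f-\hat{\mathcal K}_L f\rangle_{\rho_0}
=\mathbb E_{(X_t,X_{t+\tau})\sim\rho}\!\left[g(X_t)f(X_{t+\tau})\right]-\mathbb E\!\left[g(X_t)f(\hat X_{t+\tau})\right]
=\langle g\otimes f,\ \mu-\hat\mu\rangle,
\]
where $\mu=\mathrm{Law}(X_t,X_{t+\tau})$, $\hat\mu=\mathrm{Law}(X_t,\hat X_{t+\tau})$, and $(g\otimes f)(x,y)=g(x)f(y)$. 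The only nontrivial step here is the second expectation: since the FMRC flow \eqref{eq:flow-K} is driven by $x$ only through $r(x)$ and is started from an independent Gaussian, the conditional law of $\hat X_{t+\tau}$ given $X_t$ depends on $X_t$ only through $r(X_t)$; hence $\mathbb E[f(\hat X_{t+\tau})\mid X_t]=\mathbb E[f(\hat X_{t+\tau})\mid r(X_t)]$, and the tower property turns $\langle g,\hat{\mathcal K}_L f\rangle_{\rho_0}$ into $\mathbb E[g(X_t)f(\hat X_{t+\tau})]$. The decomposability/$\mathcal T$ side is the mirror image: swapping the two coordinates, $\langle\tilde g,\mathcal T_\tau\tilde f-\hat{\mathcal T}_D\tilde f\rangle_{\rho_1}=\langle \tilde f\otimes\tilde g,\ \mu-\check\mu\rangle$ with $\check\mu=\mathrm{Law}(\hat X_t,X_{t+\tau})$.

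Next I would bound the pairing by $\langle g\otimes f,\mu-\hat\mu\rangle\le\|g\otimes f\|_{\dot H^1}\,\|\mu-\hat\mu\|_{\dot H^{-1}}$ straight from the definition of the homogeneous Sobolev dual norm on the product manifold $M=\mathbb X\times\mathbb X$ with the Euclidean product metric, then apply \cref{H1-W2} on this $M$ — the hypothesis that both joint densities are dominated by $\beta\lambda$ is precisely the stated assumption on $\beta$ — to get $\|\mu-\hat\mu\|_{\dot H^{-1}}\le\sqrt\beta\,W_2(\mu,\hat\mu)$, and finally \cref{drift_W2} to get $W_2(\mu,\hat\mu)\le\varepsilon e^{\int_0^1 L_s\,\mathrm{d}s}$. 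The remaining point is the test-function normalization: from $\nabla_{(x,y)}\bigl(g(x)f(y)\bigr)=\bigl(f(y)\nabla g(x),\,g(x)\nabla f(y)\bigr)$ one gets $\|g\otimes f\|_{\dot H^1}^2=\|f\|_{L^2_{\rho_1}}^2\|\nabla g\|_{L^2_{\rho_0}}^2+\|g\|_{L^2_{\rho_0}}^2\|\nabla f\|_{L^2_{\rho_1}}^2\le\|g\|_{H^1_{\rho_0}}^2\|f\|_{H^1_{\rho_1}}^2\le1$ once the homogeneous Sobolev norms are taken with respect to the product of the marginals $\rho_0\otimes\rho_1$ (the weighting that makes the pairing consistent with the $H^1_\rho$ constraints in the statement). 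Taking the supremum over $\|g\|_{H^1_{\rho_0}}\le1$ and $\|f\|_{H^1_{\rho_1}}\le1$ gives $\|\mathcal K_\tau-\hat{\mathcal K}_L\|_H\le\sqrt\beta\,\varepsilon e^{\int_0^1 L_s\,\mathrm{d}s}$, and the identical argument with $\tilde\beta,\tilde\varepsilon,\tilde L_s$ and \cref{drift1-W2} yields the $\mathcal T_\tau$ bound.

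The step I expect to be the main obstacle is making the measure bookkeeping rigorous: \cref{H1-W2} and the definition of $\dot H^{-1}$ are stated for the Lebesgue measure on the product space, whereas the test functions in the theorem are normalized in the $\rho_0$- and $\rho_1$-weighted $H^1$ norms, so one must either re-run Loeper's estimate in the weighted homogeneous Sobolev norm attached to $\rho_0\otimes\rho_1$ (with the domination hypothesis read as $\mu,\hat\mu\le\beta\,(\rho_0\otimes\rho_1)$) or argue that on the bounded domain $\mathbb X$ the weighted and unweighted norms are comparable up to constants that can be absorbed into $\beta$. A second technical item, which I would either verify or fold into the hypotheses, is checking that the FMRC-generated joint laws $\hat\mu$ and $\check\mu$ are absolutely continuous with densities bounded by $\beta$ and $\tilde\beta$; this should follow from the $L_s$-Lipschitz regularity of $v_{0,\theta}^{\mathrm{RC}}$ (respectively $v_{1,\theta}^{\mathrm{RC}}$), which makes the time-one flow a bi-Lipschitz diffeomorphism with Jacobian controlled by $e^{\int_0^1 L_s\,\mathrm{d}s}$ acting on the Gaussian initial density, but it does require the push-forward density estimate to be written out.
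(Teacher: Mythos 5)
Your proposal follows essentially the same route as the paper's proof: identify the weak operator error with a duality pairing of a tensor-product test function $g\otimes f$ against the difference of the joint laws of $(X_t,X_{t+\tau})$ and $(X_t,\hat X_{t+\tau})$, verify that the $H^1_{\rho_0}\times H^1_{\rho_1}$ normalization controls the $\dot H^1$ norm of $g\otimes f$ via the product rule, and then chain the $\dot H^{-1}$ duality, \cref{H1-W2}, and \cref{drift_W2} (resp.\ \cref{drift1-W2}) to obtain $\sqrt{\beta}\,\varepsilon\, e^{\int_0^1 L_s\,\mathrm{d}s}$. The technical caveats you flag at the end (the weighted-versus-Lebesgue measure bookkeeping in the Sobolev norms, and the boundedness of the FMRC-generated joint densities) are real but are handled in the paper only implicitly through the hypothesis that the distributions ``satisfy the conditions of \cref{H1-W2} with constants $\beta,\tilde\beta$,'' so your treatment is, if anything, slightly more explicit than the paper's.
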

\begin{proof}
    See \cref{W2proof}.
\end{proof} 

\section{Numerical examples}
As a verification of the effectiveness of our method, we consider a drift-diffusion process governed by the following SDE:
\begin{equation}
\mathrm{d} X_t=-\nabla V\left(X_t\right) \mathrm{d} t+\sqrt{2 \beta^{-1}} \mathrm{~d} W_t
\end{equation}
Here $V$ is called the potential, $\beta$ is the inverse temperature, and $W_t$ denotes a standard Wiener process. We define $V(x)$ here to be a three-dimensional potential
\begin{equation}
    \begin{aligned}
    &V(x_1,x_2,x_3) = V'(x_1,x_2) + 10x_3^2\\
    &V'(x_1,x_2) = \cos(7\arctan(x_2,x_1)) + 10(\sqrt{x_1^2+x_2^2}-1)^2
    \end{aligned}
\end{equation}
$V'(x_1,x_2)$ represents a 2D circular potential with seven wells, while $x_3$ evolves as an independent Ornstein-Uhlenbeck process with equilibrium distribution $\mathcal{N}(x_3\mid0,0.05)$ and a mixing time much shorter than that of $(x_1,x_2)$. The data are simulated by the Euler-Maruyama method with a time step $\Delta t=0.001$ then transformed by a nonlinear "Swiss roll map". \cref{pic:7well_energy} shows the 2D circular potential. \cref{pic:cluster}, \cref{pic:traj_3d} and \cref{pic:traj_swiss} illustrate the process of how we ultimately obtain the data in the "Swiss roll" configuration. 
\begin{figure}[!t]
    \centering
    \begin{subfigure}{0.49\textwidth}
        \centering
        \includegraphics[width=0.9\textwidth]{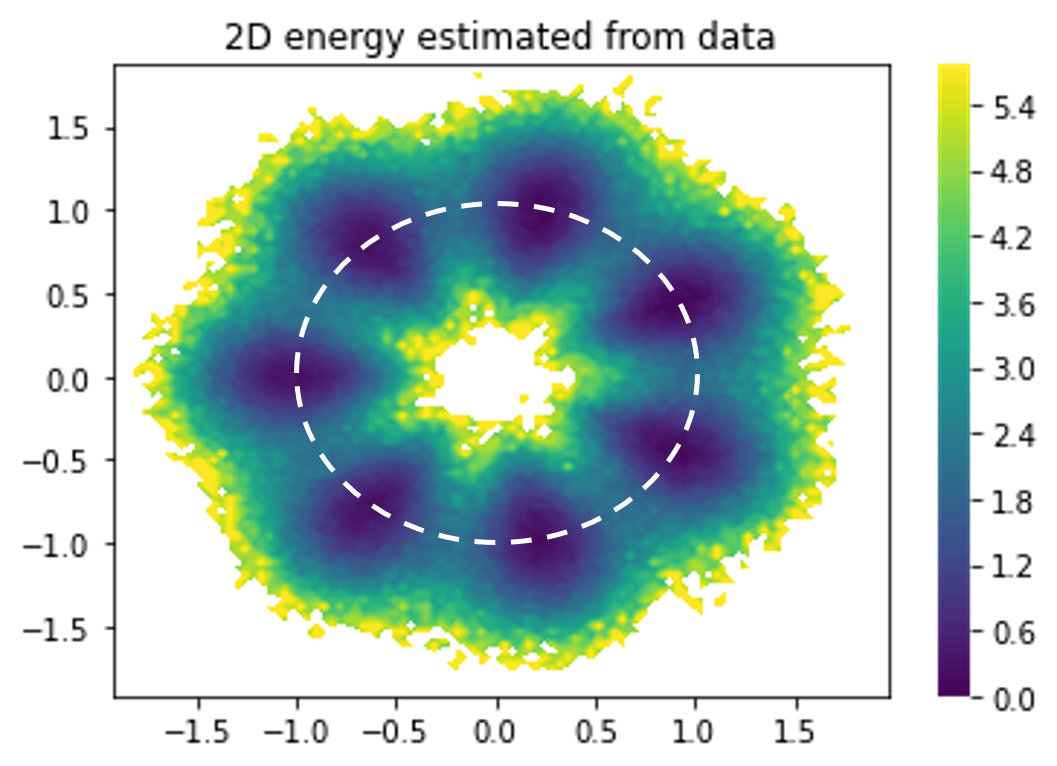}
        \caption{}
        \label{pic:7well_energy}
    \end{subfigure}
    \begin{subfigure}{0.49\textwidth}
        \centering
        \includegraphics[width=0.9\textwidth]{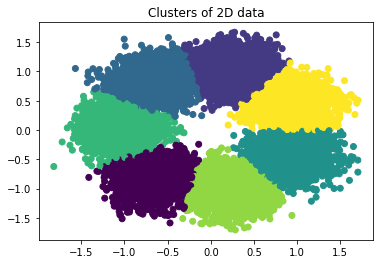}
        \caption{}
        \label{pic:cluster}
    \end{subfigure}
    
    \begin{subfigure}{0.49\textwidth}
        \centering
        \includegraphics[width=0.9\textwidth]{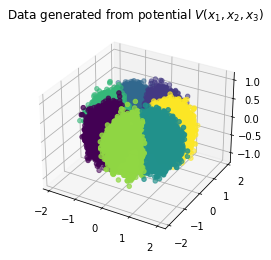}
        \caption{}
        \label{pic:traj_3d}
    \end{subfigure}
    \begin{subfigure}{0.49\textwidth}
        \centering
        \includegraphics[width=0.9\textwidth]{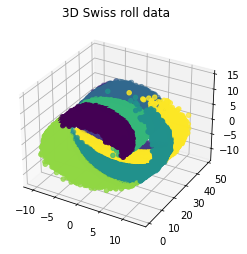}
        \caption{}
        \label{pic:traj_swiss}
    \end{subfigure}
    \caption{(a) Energy landscape: circular potential with seven wells. (b) Clustering of 2D data, generated by potential $V'(x_1,x_2)$. (c) Corresponding 3D data generated by full potential $V(x_1,x_2,x_3)$. (d) Swiss roll data, by applying a nonlinear transformation to data in (c).}
\end{figure}

We use Multi-Layer Perceptrons (MLPs) to model both the reaction coordinate and the velocity fields. We set the dimension of the reduced process to one and applied the FMRC method to obtain an optimal reaction coordinate $r=r(\mathbf{x})$. Our results demonstrate that the corresponding RC effectively distinguishes metastable states while preserving the transition dynamics.
\begin{figure}[!t]
    \centering
    \begin{subfigure}{0.49\textwidth}
        \centering
        \includegraphics[width=0.9\textwidth]{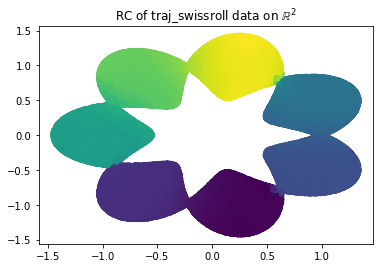}
        \caption{}
        \label{pic:RC_2d}
    \end{subfigure}
    \hfill
    \begin{subfigure}{0.49\textwidth}
        \centering
        \includegraphics[width=0.9\textwidth]{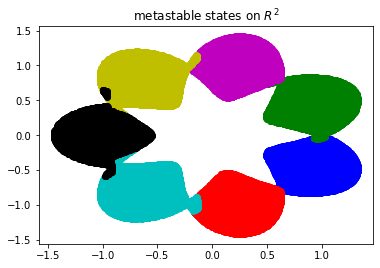}
        \caption{}
        \label{pic:meta_3d}
    \end{subfigure}
    \hfill
    \begin{subfigure}{0.49\textwidth}
        \centering
        \includegraphics[width=\textwidth]{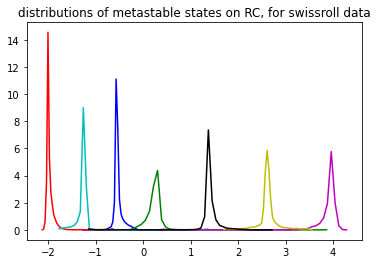}
        \caption{}
        \label{pic:RC_distribution}
    \end{subfigure}
    \caption{(a) Plot RC $r(x)$ onto 2D projected data. The values of the reaction coordinate distinguish the different wells. (b) Apply PCCA+ to the projected data to obtain clustering results and analyze the values of the reaction coordinate within each cluster. (c) The histogram of the RC values, categorized by the PCCA+ results, shows clear gaps between different clusters.}
\end{figure}

It is known that the seven local minima of $V(x)$ induce metastability in the system. This
implies that a typical trajectory will oscillate around a minimum for an extended period before suddenly, due to sufficiently strong stochastic excitation, undergoing a rapid transition to another local minimum. Both the additional dimension from the Ornstein-Uhlenbeck (OU) process and Swiss-roll transformation do not alter these dynamics. Therefore, our reaction coordinate should distinguish data originating from different wells.

\cref{pic:RC_2d} displays the values of $r(x)$ plotted on $\mathbb{R}^2$. The values of the reaction coordinate clearly distinguish the wells. To further illustrate this, we cluster the 2D data from the circular potential and examine the RC values within each cluster. \cref{pic:meta_3d} shows the clustering results by using Markov state model and PCCA+ \cite{roblitz2013fuzzy}. \cref{pic:RC_distribution} shows that the values of $r(x)$ are consistent within each cluster, yet differ between clusters. This indicates that the reaction coordinate learned from the FMRC method is effective to some extent.

\section{Conclusions}
\label{sec:conclusions}
In this paper, we introduce a novel model called FMRC for identifying optimal reaction coordinates We also highlight that our method implicitly defines reduced-order transfer operators. We present the specific form of these low-dimensional operators and prove that the corresponding operator error is small in terms of an inner-product-based operator norm.

\section*{Acknowledgments}
The authors would like to express their gratitude to Xin Zhang, Mingyuan Zhang and Yajun Wang for their insightful discussions.

\FloatBarrier

\appendix
\section{Proof of \cref{W2-mainthm}}\label{W2proof}
\begin{proof}
Here, we present the proof of the error derived from the reduced-order Koopman operator. The proof for the PF operator follows the same approach. We denote $\hat{p}_\tau^L(x,y)$ be transition densities approximated by using $v_{0,\theta}^{\mathrm{RC}}$ in FMRC. 

Utilizing the equivalence between the $W_2$ distance and the $\dot{H}^{-1}$ norm, we can obtain
\begin{align*}
    W_2(&\rho_0(x)p_\tau(x,y),\rho_0(x)\hat{p}_\tau^L(x,y)) \geq \frac{1}{\sqrt{\beta}}\| \rho_0(x)p_\tau(x,y)-\rho_0(x)\hat{p}^L_\tau(x,y) \|_{\dot{H}^{-1}(\mathbb{R}^{2D})}\\ 
    &\geq \frac{1}{\sqrt{\beta}} \sup_{\|f\|_{\dot{H}^1(\rho(x,y))}\leq 1} \left< f(x,y),\rho_0(x)p_\tau(x,y)-\rho_0(x)\hat{p}^L_\tau(x,y) \right>\\
    &\geq \frac{1}{\sqrt{\beta}} \sup_{\|g(x)h(y))\|_{\dot{H}^1(\rho(x,y))}\leq 1} \left< g(x)h(y),\rho_0(x)p_\tau(x,y)-\rho_0(x)\hat{p}^L_\tau(x,y) \right>\\
    &\geq \frac{1}{\sqrt{\beta}} \sup_{\|g(x)\|_{H^1(\rho_0(x))}\leq 1, \|h(y)\|_{H^1(\rho_1(y))}\leq 1} \left< g(x)h(y),\rho_0(x)p_\tau(x,y)-\rho_0(x)\hat{p}^L_\tau(x,y) \right>\\
    &= \frac{1}{\sqrt{\beta}} \sup_{\|g(x)\|_{H^1(\rho_0(x))}\leq 1, \|h(y)\|_{H^1(\rho_1(y))}\leq 1}  \int_X \int_Y g(x)h(y)\left(\rho_0(x)p_\tau(x,y)-\rho_0(x)\hat{p}^L_\tau(x,y)\right)\mathrm{d}x\mathrm{d}y \\
    &= \frac{1}{\sqrt{\beta}} \sup_{\|g(x)\|_{H^1(\rho_0(x))}\leq 1, \|h(y)\|_{H^1(\rho_1(y))}\leq 1}  \int_X g(x)\rho_0(x) \left(\int_Y \left(h(y)p_\tau(x,y)-h(y)\hat{p}^L_\tau(x,y)\right)\mathrm{d}y\right) \mathrm{d}x\\
    &= \frac{1}{\sqrt{\beta}} \sup_{\|g(x)\|_{H^1(\rho_0(x))}\leq 1, \|h(y)\|_{H^1(\rho_1(y))}\leq 1} \left<g(x), \mathcal{K}_\tau h-\hat{\mathcal{K}}_L h\right>_{\rho_0}\\
    &= \frac{1}{\sqrt{\beta}}\|\mathcal{K}_\tau -\hat{\mathcal{K}}_L\|_H
\end{align*}

For the reduced-order Perron-Frobenius (PF) operator, the proof follows the same approach
\begin{align*}
    W_2(&\rho_0(x)p_\tau(x,y),\rho_0(x)\hat{p}_\tau^L(x,y)) \geq \frac{1}{\sqrt{\beta}}\| \rho_0(x)p_\tau(x,y)-\rho_0(x)\hat{p}^L_\tau(x,y) \|_{\dot{H}^{-1}(\mathbb{R}^{2D})}\\ 
    &\geq \frac{1}{\sqrt{\beta}} \sup_{\|f\|_{\dot{H}^1(\rho(x,y))}\leq 1} \left< f(x,y),\rho_0(x)p_\tau(x,y)-\rho_0(x)\hat{p}^L_\tau(x,y) \right>\\
    &\geq \frac{1}{\sqrt{\beta}} \sup_{\|g(x)h(y))\|_{\dot{H}^1(\rho(x,y))}\leq 1} \left< g(x)h(y),\rho_0(x)p_\tau(x,y)-\rho_0(x)\hat{p}^L_\tau(x,y) \right>\\
    &\geq \frac{1}{\sqrt{\beta}} \sup_{\|g(x)\|_{H^1(\rho_0(x))}\leq 1, \|h(y)\|_{H^1(\rho_1(y))}\leq 1} \left< g(x)h(y),\rho_0(x)p_\tau(x,y)-\rho_0(x)\hat{p}^L_\tau(x,y) \right>\\
    &= \frac{1}{\sqrt{\beta}} \sup_{\|g(x)\|_{H^1(\rho_0(x))}\leq 1, \|h(y)\|_{H^1(\rho_1(y))}\leq 1}  \int_X \int_Y g(x)h(y)\left(\rho_0(x)p_\tau(x,y)-\rho_0(x)\hat{p}^L_\tau(x,y)\right)\mathrm{d}x\mathrm{d}y \\
    &= \frac{1}{\sqrt{\beta}} \sup_{\|g(x)\|_{H^1(\rho_0(x))}\leq 1, \|h(y)\|_{H^1(\rho_1(y))}\leq 1}  \int_Y h(y)\rho_1(y) \int_X \frac{\rho_0(x)}{\rho_1(y)}\left( p_\tau(x,y)-\hat{p}^L_\tau(x,y) \right)g(x)\mathrm{d}x \mathrm{d}y\\
    &= \frac{1}{\sqrt{\beta}} \sup_{\|g(x)\|_{H^1(\rho_0(x))}\leq 1, \|h(y)\|_{H^1(\rho_1(y))}\leq 1} \left<h(y), \left(\mathcal{T}_\tau g-\hat{\mathcal{T}}_D g\right)(y)\right>_{\rho_1}\\
    &= \frac{1}{\sqrt{\beta}}\|\mathcal{T}_\tau -\hat{\mathcal{T}}_D\|_H
\end{align*}

The detailed proof that the condition $\|f\|_{\dot{H}^1(\rho(x,y))}\leq 1$ can be transformed into the conditions 
\[
\|g(x)\|_{H^1(\rho_0(x))}\leq 1,\quad \|h(y)\|_{H^1(\rho_1(y))}\leq 1
\] 
is as follows.
\begin{align*}
    &\|f\|_{\dot{H}^1(\rho(x,y))}\leq 1 \Longleftrightarrow \int_{X\times Y}\|\nabla f(x,y)\|^2_{\mathbb{R}^{2n}}\rho(x,y)\mathrm{d}x\mathrm{d}y\leq 1\\
    & \Longrightarrow \int_{X\times Y}\|\nabla\cdot \left(g(x)h(y)\right)\|^2_{\mathbb{R}^{2n}}\rho(x,y)\mathrm{d}x\mathrm{d}y\leq 1\\
    & \Longrightarrow \int_{X\times Y}\left(\|h(y)\nabla_x g(x)\|^2_{\mathbb{R}^n}+\|g(x)\nabla_y h(y)\|^2_{\mathbb{R}^n}\right)\rho(x,y)\mathrm{d}x\mathrm{d}y\leq 1\\
    & \Longrightarrow \int_{X\times Y}h^2(y)\|\nabla_x g(x)\|^2_{\mathbb{R}^n}\rho(x,y)\mathrm{d}x\mathrm{d}y + \int_{X\times Y}g^2(x)\|\nabla_y h(y)\|^2_{\mathbb{R}^n}\rho(x,y)\mathrm{d}x\mathrm{d}y \leq 1\\
    & \Longrightarrow \|g(x)\|_{H^1(\rho_0(x))}\leq 1,\quad \|h(y)\|_{H^1(\rho_1(y))}\leq 1
\end{align*}

\end{proof}

\bibliographystyle{siamplain}
\bibliography{references}
\end{document}